\documentclass{ifacconf}

\usepackage{graphicx}      
\usepackage{natbib}        
\usepackage{amsmath}
\usepackage{amssymb}
\usepackage{amsfonts}
\usepackage{tikz}
\usetikzlibrary{calc}
\usepackage{stfloats}
\usepackage{pgfplots}
\pgfplotsset{compat=1.18}
\usepackage{epstopdf}
\usepackage{enumerate}

\definecolor{myblue}{RGB}{31,119,180}
\definecolor{myorange}{RGB}{255,127,14}
\definecolor{mygreen}{RGB}{44,160,44}
\definecolor{myred}{RGB}{214,39,40}
\definecolor{mypurple}{RGB}{148,103,189}
\definecolor{mycyan}{RGB}{23,190,207} 
\definecolor{myteal}{RGB}{0, 128, 128}

\newcommand{\supobsv}{\bar{\mathcal{O}}}
\newcommand{\subobsv}{\underline{\mathcal{O}}}
\newcommand{\bfA}{\mathbf{A}}
\newcommand{\bfB}{\mathbf{B}}
\newcommand{\bfC}{\mathbf{C}}
\newcommand{\bfCs}{{\bfC^*}}
\newcommand{\bfE}{\mathbf{E}}
\newcommand{\bfEt}{\bfE(t)}
\newcommand{\bfK}{\mathbf{K}}
\newcommand{\bfL}{\mathbf{L}}

\newcommand{\mcX}{\mathcal{X}}
\newcommand{\T}{^T}

\newcommand{\tz}{\tilde{z}}
\newcommand{\bT}{\bar{T}}
\newcommand{\etx}{e_{\tilde{x}}}
\newcommand{\bd}{\bar{d}}
\newcommand{\bwp}{{\bar{w}'}}

\begin{document}
\begin{frontmatter}


\title{Secure estimator design for \\ Lur'e-type systems with nonuniformly and synchronously sampled measurements under attacks [extended version]\thanksref{footnoteinfo}}

\thanks[footnoteinfo]{This work is funded through the CETPartnership’s ProRES project under the Joint Call 2024, co‑funded by the European Commission (Grant Agreement No. 101069750) and the Dutch Research Council NWO (File No. EP.1602.24.001).
}

\author[First]{Julian Gootzen} 
\author[First]{Michelle S. Chong}

\address[First]{Department of Mechanical Engineering, \\ Eindhoven University of Technology, 
   the Netherlands \\ (e-mail: j.s.j.gootzen@student.tue.nl, m.s.t.chong@tue.nl).}

\begin{abstract}                
Motivated by the need for real-time health monitoring of power distribution grids, we propose a secure state estimator design for continuous time Lur’e type systems with non-uniformly and synchronously sampled outputs which have potentially been maliciously corrupted. The secure state estimator provides state estimates with accuracy independent of the sensor attack, when less than half of the sensors are under attack and when all inter-sample times are upper bounded. We show convergence of the state estimation error under an impulsive system framework and provide an upper bound on the estimation error that is independent of the attack signals. The stability conditions are formulated as linear matrix inequalities, which can be used to design the observer parameters.  We demonstrate the capabilities of the proposed secure state estimator on a low-voltage power distribution grid.
\end{abstract}

\begin{keyword}
Cyber security networked control, Sampled-data/digital control, Nonlinear observers and filters, Cybersecurity in smart grids, Secure state estimation
\end{keyword}

\end{frontmatter}

\section{Introduction}
Secure state estimation refers to the problem of estimating the state of a plant when the sensors of the plant are being manipulated with malicious intent. In particular, the state estimation error has an upper bound that is independent of the attack on the sensors. Cyber-physical systems are particularly vulnerable to sensor attacks as the communication medium (cyber component) adds a vulnerable entry point for malicious actors, from where measurements can be manipulated. An example of such a system is the low-voltage electricity distribution grid, which motivated this work and is used as a case study in this paper. Each customer sends voltage measurements to a centralized monitoring center, where an estimate of the distribution grid's state is generated. This transmission of measurements from the customer to the monitoring center realizes such a vulnerability that can be exploited.

In recent years, several designs of secure state estimators for nonlinear systems have been proposed, such as in \cite{Chong2020AAttacks}, \cite{Kim2019SensorAttacks}, and \cite{Yang2022SensorFaultTolerant}. However, to the best of our knowledge, no work addresses the case where potentially manipulated measurements are only available at sampling times.

State estimation of nonlinear systems \textit{in the absence of sensor attacks}, with non-uniformly sampled measurements has been studied before. In \cite{raff2008observer}, an observer is proposed which applies a zero order hold in between samples. \cite{DING2009324} and \cite{ARCAK20041931} implement a discrete-time observer of a discretized version of the plant.

In this paper, we propose a secure state estimator for sector bounded Lur'e-type systems with nonuniformly and synchronously sampled measurements. We build on the multi-observer as proposed in \cite{Chong2020AAttacks}, which uses continuously available measurements. The multi-observer provides a secure state estimate when more than half of the outputs remain attack-free. Since we no longer have continuous access to the measurements, we implement a zero order hold scheme in between samples. We aim to show that such an observer is input-to-state stable with respect to noise, disturbances and sampling errors under an impulsive system framework, using a finite-dimensional Lyapunov function as proposed in \cite{nagh2008exponential} and \cite{raff2008observer}. The stability conditions that result from this analysis are formulated as Linear Matrix Inequalities (LMIs), which are used to design observer matrices.

We demonstrate the efficacy of the proposed observer design by observing a low-voltage power distribution network with several customers. The observer receives voltage measurements from the customers at nonuniform sampling times synchronously, while the measurements strictly less than half of the measurements are being maliciously manipulated. Simulations confirm that the proposed secure observer design is able to provide an accurate state estimate in spite of the attacks.

The main contributions of this paper are:
\begin{itemize}
    \item an extension of the multi-observer based secure state estimator  in \cite{Chong2020AAttacks} to accomodate nonuniformly but synchronously sampled measurement data;
    \item an LMI-based observer design;
    \item the application of the secure state estimator to the monitoring of power distribution networks.
\end{itemize}

\section{Notation} \label{sec:notation}
Let $\mathbb{R}=(-\infty,\infty)$, $\mathbb{R}_{\geq0} = [0,\infty)$, $\mathbb{R}_{>0} = (0,\infty)$, $\mathbb{N}=\{0,1,2,3,\dots\}$, $[k] := \{1,2,\dots,k\}$. The cardinality of a set $S$ is denoted by $|S|$. The identity matrix of dimension $n$ is denoted by $\mathbb{I}_n$. Given $u\in \mathbb{R}^{n_u}$ and $v \in \mathbb{R}^{n_v}$, the column vector $(u^T,v^T)^T$ is denoted by $(u,v)$ and the inner product is denoted by $\langle u,v \rangle$ (for $n_u=n_v)$. Let $\operatorname{diag}(x_1,x_2,\dots,x_k)$ denote the diagonal matrix with the elements of the vector $(x_1, x_2, \cdots, x_k)$ placed along the diagonal of the matrix. Let $\lambda_{min}(A)$ and $\lambda_{max}(A)$ denote the smallest and largest eigenvalue of matrix $A$. The Euclidean norm of a matrix is denoted $|A|=\sqrt{\lambda_{max}(A^TA)}$. The euclidean norm of a vector $x$ is denoted $|x|=\sqrt{x^Tx}$. Let $||\eta||_{[t_0,t_1]}$ denote the sup-norm $\sup_{s\in [t_0,t_1]} |\eta(s)|$. Let $\textbf{1}_n$ denote a vector in $\mathbb{R}^n$ with all elements having the value 1. The Kronecker product of two matrices $A=[a_{ij}] \in \mathbb{R}^{m \times n}$ and $B$ is denoted by $A \otimes B = \begin{bmatrix} a_{11}B & \dots & a_{1n}B \\ \vdots & \ddots & \vdots \\ a_{m1}B & \dots & a_{mn}B \end{bmatrix}$. Standard definitions for the class $\mathcal{K_{\infty}}$ and $\mathcal{KL}$ comparisons are used, see \cite{hybrid-dynamical-systems}.

\section{Problem statement}\label{sec:problem-statement}
We aim to provide a continuous state estimate of a power distribution network with $N_c$ customers interconnected in a radial fashion. The voltage is measured at each customer and transmitted in packets synchronously over a shared communication channel. This communication channel is vulnerable and hence susceptible to attacks. The power distribution system is a Lur'e type system of the following form:
\begin{equation}
    \begin{split}
        \dot{x}(t) &= Ax(t) + B\phi(m), \; m(t) = Cx(t) + u(t) + d(t), \\
        y(t) &= m(t) + a(t) + \eta(t),
    \end{split}
    \label{eq:standard-system-form}
\end{equation}
where $x\in\mathbb{R}^{N_c}$ is the state vector;  $u\in\mathbb{R}^{N_c}$ is a measured input vector; $a = (a_1, a_2, \dots, a_{N_c})$ is the attack vector, where $a_i \ : \ \mathbb{R}_{\geq 0 } \to \mathbb{R}, \ i \in [N_c]$ is a potentially unbounded, unknown attack signal on sensor $i$; $d \ : \ \mathbb{R}_{\geq0} \to  \mathbb{R}^{N_c}$ and $\eta \ : \ \mathbb{R}_{\geq0} \to \mathbb{R}^{N_c}$ are unknown process disturbance and measurement noise vectors respectively; $A \in \mathbb{R}^{N_c \times N_c}$ is the state matrix; $B\in \mathbb{R}^{N_c \times N_c}$ is the input matrix; $C \in \mathbb{R}^{N_c \times N_c}$ is the output matrix; and the nonlinearity $\phi: \mathbb{R}^{N_c} \to \mathbb{R}^{N_c}$ satisfies the following:
\begin{assum}\label{as:nonlinearity}
    Each component of the nonlinearity,
    \begin{equation}
        \phi(m)= \left(\phi_1(m_1), \phi_2(m_2), \dots, \phi_{N_c}(m_{N_c})\right),
    \end{equation}
    $\phi_i \ : \mathbb{R} \to \mathbb{R}$ satisfies the incremental sector condition
    \begin{equation}
        0 \leq \frac{\phi_i(a)-\phi_i(b)}{a-b} \leq \zeta_i, \quad \forall\ a,b \in \mathbb{R}, \ a \neq b,
        \label{eq:incremental-sector-condition}
    \end{equation}
    where $\zeta_i > 0$ for all $i\in[N_c]$.
\end{assum}
The plant as in \eqref{eq:standard-system-form} is represented in Figure \ref{fig:plant-network} by the teal layer \textcolor{myteal!40}{$\blacksquare$}. We provide further details on the model of the low voltage power distribution network in Section \ref{sec:power-distirbution-network}. Let us now make the following non-conservative assumptions about the sensor attack.
\begin{assum}
    The attack vector $a:=(a_1,a_2,\dots,a_{N_c})$ satisfies the following:
    \begin{enumerate}[(i)]
        \item sensors $i \in [N_c]$ with $N_c\in\mathbb{N}$ which are not under attack satisfy $a_i(t)=0$ for all $t \geq 0$;
        \item the set of attacked sensors is constant;
        \item no more than $N_a\in\mathbb{N}$ sensors are under attack, where $2N_a < N_c$.
    \end{enumerate}
    \label{as:attack}
\end{assum}
Assumption \ref{as:attack}-(iii) is crucial as our estimation algorithm employs a majority voting-like scheme such that the sensors uncorrupted by attacks are used in the generation of the state estimate. This requirement is also employed in the secure state estimation schemes of \cite{Chong2020AAttacks, van2024resilient}.

Contrary to existing works, the measurement data $m_i$ from each sensor $i\in \mathbb{N}$ is transmitted over a communication channel in packets at times $t_k,\ k \in \mathbb{N}$, with non-uniform sampling intervals, i.e., the inter-sample times $t_{k+1}-t_k$ vary. We assume that transmission delays are negligible and hence, the received packets from each $i$-th sensor is $y_i(t_k)$ as modelled in system \eqref{eq:standard-system-form}, which incorporates disturbances and potential malicious corruption due to the vulnerability of the communication channel. We define the set of all sampling times as $\mathcal{D} = \{t_k\ : \ k\in\mathbb{N} \}$ and let $\mathcal{C}$ denote all times $t$ when the outputs are not sampled: $\mathcal{C} = \mathbb{R}_{\geq 0} \setminus \mathcal{D}$. The network is schematically represented in Figure \ref{fig:plant-network} as the red layer \textcolor{myred!40}{$\blacksquare$}. We now formalize the assumptions on the sensor sampling. 
\begin{assum}
    The set of sampling times $\mathcal{D}$ is an increasing sequence that tends to infinity. Let there also exist $(\underline{T},\bar{T}) \in \mathbb{R}_{\geq 0}^2$, with $\underline{T} < \bar{T}$ such that
    \begin{equation}
        t_0 = 0, \quad t_{k+1} - t_k \in [\underline{T},\bar{T}], \quad k \in \mathbb{N}.
    \end{equation}
    \label{as:sampling}
\end{assum}
\vspace{-13pt}
Note that each customer transmitting their measurement data synchronously is not necessarily a realistic assumption for a real low voltage electricity distribution grid.

\begin{figure}[h]
    \centering
    \begin{tikzpicture}[
            node distance=0.5cm,
            block/.style={rectangle, draw=black!100, thick, minimum size=5mm, text centered},
            vertline_node/.style={
                draw=none,
                rotate=0,
                anchor=center,
                inner sep=2pt,
                text centered,
                append after command={
                    \pgfextra{
                        \draw[black, very thick] 
                            ([xshift=0\pgflinewidth] \tikzlastnode.south west) -- 
                            ([xshift=0\pgflinewidth] \tikzlastnode.north west);
                        \draw[black, very thick] 
                            ([xshift=0\pgflinewidth] \tikzlastnode.south east) -- 
                            ([xshift=0\pgflinewidth] \tikzlastnode.north east);
                    }
                }
            },
            legend/.style={rectangle, draw=black, minimum size = 3mm}
        ]
        \filldraw[fill=myteal!20] (-3,2.1) -- (3,2.1) -- (3,0.40) -- (-3,0.40) -- (-3,2.1);
        \node (plant) at (0,1.7) [block, text width = 2cm, fill=myteal!40] {Plant};
        \node (c1) at (-2,0.9) [block, text width = 1.1cm, fill=myteal!40] {\small Cust. $1$};
        \node (c2) at (-0.5,0.9) [block, text width = 1.1cm, fill=myteal!40] {\small Cust. $2$};
        \node (cdots) at (0.78,0.9) {$\cdots$};
        \node (cN) at (2,0.9) [block, text width = 1.2cm, fill=myteal!40] {\small Cust. $N_c$};

        \node (comm) at (0,-0.4) [block, fill=myred!40, text width = 6cm, minimum height = 0.6cm]   {\footnotesize Communication Channel};

        \draw[->] (c1) -- (-2,-0.1) node[right, pos=0.65] {\small $m_1(t)$};
        \draw[->] (c2) -- (-0.5,-0.1) node[right, pos=0.65] {\small $m_2(t)$};
        \draw[->] (cN) -- (2,-0.1) node[right, pos=0.6] {\small $m_{N_c}(t)$};

        \draw[->] (-2,-0.7) -- (-2,-1.3) node[right, pos=0.5] {\small $y_1(t_k)$};
        \draw[->] (-0.5,-0.7) -- (-0.5,-1.3) node[right, pos=0.5] {\small $y_2(t_k)$};
        \draw[->] (2,-0.7) -- (2,-1.3) node[right, pos=0.5] {\small $y_{N_c}(t_k)$};

        \draw[->] (-3.5, -0.4) -- (comm.west) node[above, pos=0] {\small $\eta(t)$};
        \draw[->] (3.5,-0.4) -- (comm.east) node[above, pos=0] {\small $a(t)$};
    \end{tikzpicture}
    \vspace{-1em}
    \caption{A schematic diagram of the plant and communication network}
    \label{fig:plant-network}
\end{figure}
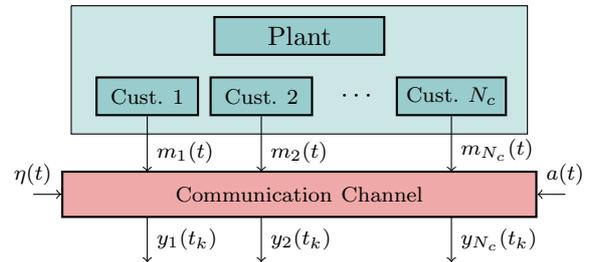
We now aim to create a \textit{secure state estimate} $\hat{x}$ of the state $x$ in system \eqref{eq:standard-system-form}, satisfying Assumption \ref{as:nonlinearity}, whose sensors have been compromised by attack $a$ satisfying Assumption \ref{as:attack} and where sensor readings $y$ are only available at sample times $t_k$, satisfying Assumption \ref{as:sampling}. A state estimator is referred to as \textit{secure} if the bound on the state estimation error $x-\hat{x}$ is independent of any attack vector $a(t)$, satisfying Assumption \ref{as:attack}, i.e., for all initial conditions $(x(0),\hat{x}(0)) \in \mathbb{R}^{N_c} \times \mathbb{R}^{N_c}$ and $t \geq 0$, we have
\begin{equation*}
    |x(t)-\hat{x}(t)| \leq \max\{ \beta(|x(0) - \hat{x}(0)|,t), \gamma(||(\delta,d,\eta)||_{[0,t]})\},
\end{equation*}
where $\delta(s):=y(s)-y(t_k)$ is the sampling induced disturbance, $\beta \in \mathcal{KL}$ and $\gamma \in \mathcal{K_\infty}$.


\section{Secure state estimator design}\label{sec:observer-design}
In order to mitigate the sensor attacks $a(t)$, we implement a multi-observer setup as in \cite{Chong2020AAttacks}, which has been shown to be a secure state estimator when provided with continuous measurements $y(t)$. However, due to the aforementioned communication constraints, these measurements are now sampled non-uniformly, as described in Assumption \ref{as:sampling}. Hence, a zero-order hold is applied in between sampling times. We now introduce the multi-observer and selection procedure, and conclude with an upper bound for the sampled-data estimation error system. 

\subsection{Multi-observer}
The multi-observer has \textit{super}- and \textit{sub-observers}, which are denoted by $\supobsv$ and $\subobsv$ and are represented by the orange \textcolor{myorange!40}{$\blacksquare$} and green \textcolor{mygreen!40}{$\blacksquare$} blocks in Figure \ref{fig:observer-diagram} respectively. All observers in the multi-observer use a subset of the sampled outputs $y(t_k)$, given by $\supobsv = \{ {S} \subseteq [N_c]: |{S}| = N_c - N_a  \}$ and $\subobsv =\{ {S} \subseteq [N_c]: |{S}| = N_c - 2N_a  \}$. This results in $N_{\mathcal{O}} = \bar{o} +\underline{o}$ observers in the multi-observer. With $\bar{o} = \binom{N_c}{N_c-N_a}$ and $\underline{o} = \binom{N_c}{N_c-2N_a}$, where in turn $\binom{n}{k}$ denotes the number of $k$-element subsets of an $n$-element set. Let $\supobsv_{i},\ i \in [\bar{o}]$ or $\subobsv_{j},\ j \in [\underline{o}]$ denote the set of sensor indices used by a single super- or sub-observer, which generate the state estimates $\hat{\bar{x}}^i$ and $\hat{\underline{x}}^j$ respectively.

In order to make statements about the state estimates of \textit{both} the super- and sub-observers compactly, we define $\hat{x}^{S},\ S \in \{\supobsv,\subobsv\}$ as the state estimate generated by the observer using sensor set $S$, i.e. using the outputs ${y}_i$ for $i\in S$. For example, consider the observer using outputs $S=\{1,3,4\}$. The state estimate is denoted $\hat{x}^{\{1,3,4\}}$, where the sensor indices in the superscript denote that this state estimate is generated by the observer using the outputs $y_{i}$ for $i\in S$. The upper or lower bars are omitted when referring to either super- or sub-observers generally. Furthermore, we will denote the output this observer uses as ${y}_{\{1,3,4\}} = ({y}_1,{y}_3,{y}_4)$, where the sensor indices in the subscript denote the selection of those entries from the full output vector $y=(y_1,y_2,\dots,y_{N_c})$.

The state estimates of all super- and sub-observers $S\in \{\supobsv,\subobsv\}$ are generated by 
\begin{equation}
    \begin{split}
        \dot{\hat{x}}^S &= A\hat{x}^S + B\phi(\hat{m}^S) + L^S(C_S\hat{x}^S(t_k) + u_S(t_k) - {y}_S(t_k)) \\
        \hat{m}^S &= C\hat{x}^S + u + K^{S}(C_{S}\hat{x}^S(t_k) + u_{S}(t_k) - {y}_{S}(t_k))
    \end{split}
    \label{eq:multi-observer}
\end{equation}
where $K^{S} \in\mathbb{R}^{N_c \times |S|}$ and $L^{S} \in\mathbb{R}^{N_c \times |S|}$ are observer gains to be designed. Note that $C_{S}$ denotes the submatrix of $C$ created by stacking the rows indexed by $s \in S$ and $u_{S}$ denotes a vector consisting of the components of the vector $u$ indexed by $s \in S$.

\subsection{State estimate selection procedure}
We now introduce the selection procedure that is used to select the state estimate $\hat{x}$ from all state estimates generated by the super-observers $\hat{\bar{x}}^i,\ i \in [\bar{o}]$. We select the super-observer state estimate $\hat{\bar{x}}^i$ that is \textit{most consistent} with all its sub-observers. We call $\subobsv_j$ a sub-observer \textit{of} $\supobsv_i$ if it only uses outputs that are also used by $\supobsv_i$, i.e. $\subobsv_j \subseteq \supobsv_i, \ j \in [\underline{o}], \ i \in [\bar{o}]$. A consistency mapping $\Phi: \mathbb{R}^{N_cN_{\mathcal{O}}} \rightarrow \mathbb{R}^{N_c}$ selects the state estimate of the most consistent super-observer. Let $\pi^i \in \mathbb{R}_{\geq 0}$ denote a consistency measure corresponding to the state estimate $\hat{\bar{x}}^i,\ i \in [\bar{o}]$. It quantifies the consistency of the super-observer state estimates with respect to the state estimates constructed by their sub-observers $\hat{\underline{x}}^j,\ j \in [\underline{o}]$,
\begin{equation}\label{eq:consistency-measure}
    \pi^{i}(t) = \max_{\{j \in [\underline{o}] \ : \ \subobsv_j \subset \supobsv_i \}} |\hat{\bar{x}}^{i}(t) - \hat{\underline{x}}^{j}(t)|, \ i \in [\bar{o}],
\end{equation}
for $t \geq 0$. The state estimate $\hat{x}(t)$ is selected from all state estimates generated by the super-observers as
\begin{equation}\label{eq:selection-criterion}
    \hat{x}(t)=\hat{\bar{x}}^{\sigma(t)}(t), \quad \sigma(t) = \operatornamewithlimits{arg \ min}_{i} \pi^{i}(t), \ i \in [\bar{o}].
\end{equation}
The complete selection procedure is denoted by $\hat{x} = \Phi(\hat{z})$, where $\hat{z}=(\hat{\bar{x}}^1,\dots,\hat{\bar{x}}^{\bar{o}},\hat{\underline{x}}^1,\dots,\hat{\underline{x}}_{\underline{o}})$, and schematically represented by the cyan layer in Figure \ref{fig:observer-diagram} \textcolor{mycyan!20}{$\blacksquare$}.
\begin{figure}[ht]
    \centering
    \begin{tikzpicture}[
            node distance=0.5cm,
            block/.style={rectangle, draw=black!100, thick, minimum size=5mm, text centered},
            vertline_node/.style={
                draw=none,
                rotate=90,
                anchor=center,
                inner sep=2pt,
                text centered,
                append after command={
                    \pgfextra{
                        \draw[black, thick] 
                            ([xshift=0.5\pgflinewidth] \tikzlastnode.south west) -- 
                            ([xshift=0.5\pgflinewidth] \tikzlastnode.south east);
                        \draw[black, thick] 
                            ([xshift=-0.5\pgflinewidth] \tikzlastnode.north west) -- 
                            ([xshift=-0.5\pgflinewidth] \tikzlastnode.north east);
                    }
                }
            },
            legend/.style={rectangle, draw=black, minimum size = 3mm}
        ]
        
        \filldraw[fill=myteal!20] (-3,-2.4) -- (3,-2.4) -- (3,-3);
        \filldraw[fill=myred!20] (3,-3) -- (-3,-3) -- (-3,-2.4);
        \node (PN) at (0,-2.7) {Plant \& Comm. Network};
        
        \draw[->] (-2,-3) -- (-2,-3.7) node[right, pos=0.5] {\small $y_1(t_k)$};
        \draw[->] (-0.5,-3) -- (-0.5,-3.7) node[right, pos=0.5] {\small $y_2(t_k)$};
        \draw[->] (2,-3) -- (2,-3.7) node[right, pos=0.5] {\small $y_{N_c}(t_k)$};
        \draw[-, very thick] (-3,-3.7) -- (3,-3.7);
    
        \filldraw[fill=mypurple!20] (-3.4,-4.4) -- (3.4,-4.4) -- (3.4,-6.6) -- (-3.4,-6.6) -- (-3.4,-4.4);

        \filldraw[fill=myorange!20] (-3,-4.6) -- (-0.1,-4.6) -- (-0.1,-5.4) -- (-3,-5.4) -- (-3,-4.6);
        \filldraw[fill=mygreen!20] (0.1,-4.6) -- (3,-4.6) -- (3,-5.4) -- (0.1,-5.4) -- (0.1,-4.6);
        
        \node (osup1) at (-2.4,-5) [block, fill=myorange!40] {$\bar{\mathcal{O}}_{1}$};
        \node (osupdots) at (-1.55,-5) {$\cdots$};
        \node (osupo) at (-0.7,-5) [block, fill=myorange!40] {$\bar{\mathcal{O}}_{\bar{o}}$};

        \draw[->, very thick] (-2.4,-3.7) -- (osup1.north) node[right, pos=0.4] {\small ${y}_{\bar{\mathcal{O}}_1}(t)$};
        \draw[->, very thick] (-0.7,-3.7) -- (osupo.north) node[right, pos=0.4] {\small ${y}_{\bar{\mathcal{O}}_{\bar{o}}}(t)$};

        \node (osub1) at (0.7,-5) [block, fill=mygreen!40] {$\underline{\mathcal{O}}_{1}$};
        \node (osubdots) at (1.55, -5) {$\cdots$};
        \node (osubo) at (2.4,-5) [block, fill=mygreen!40] {$\underline{\mathcal{O}}_{\underline{o}}$};

        \draw[->, very thick] (0.7,-3.7) -- (osub1.north) node[right, pos=0.4] {\small ${y}_{\underline{\mathcal{O}}_1}(t)$};
        \draw[->, very thick] (2.4,-3.7) -- (osubo.north) node[right, pos=0.4] {\small ${y}_{\underline{\mathcal{O}}_{\underline{o}}}(t)$};

        \node (phi) at (0,-6.2) [block, text width=5.8cm, fill=mycyan!20, line width=0.4] {$\Phi(\hat{z})$};
        
        \draw[-] (phi.south) -- (0,-6.8);
        \draw[->] (0,-6.8) -- (0,-6.9) node[below] {$\hat{x}(t)$};

        \draw[->] (osup1.south) -- ($(phi.north) + (-2.4,0cm)$) node[right,pos=0.6] {\small $\hat{\bar{x}}^{1}(t)$};
        \draw[->] (osupo.south) -- ($(phi.north) + (-0.7,0cm)$) node[right,pos=0.6] {\small $\hat{\bar{x}}^{\bar{o}}(t)$};
        \draw[->] (osub1.south) -- ($(phi.north) + (0.7,0cm)$) node[right,pos=0.6] {\small $\hat{\underline{x}}^{1}(t)$};
        \draw[->] (osubo.south) -- ($(phi.north) + (2.4,0cm)$) node[right,pos=0.6] {\small $\hat{\underline{x}}^{\underline{o}}(t)$};       
    \end{tikzpicture}
    \vspace{-1em}
    \caption{Schematic diagram of the secure state observer.}
    \label{fig:observer-diagram}
\end{figure}
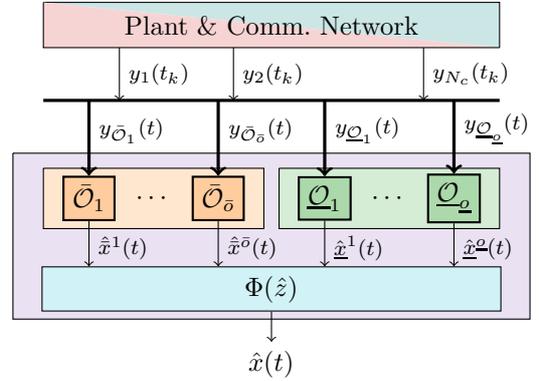

\subsection{Convergence analysis}
We now analyze the convergence of the multi-observer error dynamics. We derive the error dynamics ${\tilde{x}}^S := {x} - {\hat{x}}^S$ of each observer in the multi-observer $S \in \{\supobsv,\subobsv\}$, using \eqref{eq:standard-system-form} and \eqref{eq:multi-observer},
\begin{equation*}\label{eq:dot-tilde-x}
    \begin{split}
        \dot{\tilde{x}}^S = A\dot{\tilde{x}}^S + L^S(C_S\tilde{x}^S(t_k) + w_S(t_k)) + B \phi(Cx + u + d) \\ - B\phi( C\hat{x}^S + u - K^{S}(C_{S}\tilde{x}^S(t_k) + w_{S}(t_k) )), \\
    \end{split}
\end{equation*}
where $w_{S}:= d_S+a_{S}+\eta_{S}$ and we use $C_{S}\hat{x}^S(t_k) + u_{S}(t_k)  - y_{S}(t_k) = -C_S\tilde{x}^S(t_k) - w_{S}(t_k)$. Under Assumption \ref{as:sampling}, the sector condition \eqref{eq:incremental-sector-condition} holds and hence, $\phi(a) - \phi(b) = \varepsilon(t)(a-b), \ \forall a,b \in \mathbb{R}^{N_c}$, where $\varepsilon(t) \in \mathbb{R}^{N_c \times N_c}$ is a diagonal matrix with diagonal components $0 \leq \varepsilon_{ii}(t) \leq \zeta_i,\ i \in [N_c]$. Thus,
\begin{equation}\label{eq:interim-tildex-s}
    \begin{split} 
        \dot{\tilde{x}}^S = &A\tilde{x}^S + B\varepsilon(t)C\tilde{x}^S + (B\varepsilon(t)K^S + L^S) w_S(t_k) \\ & + B\varepsilon(t)d + (B\varepsilon(t)K^S + L^S)C_S \tilde{x}^S(t_k).
    \end{split}
\end{equation}
We analyze the stability of the sampled-data  estimation error system \eqref{eq:interim-tildex-s} under an impulsive system framework developed in \cite{nagh2008exponential}. To this end, we collect all individual state error vectors $\tilde{x}^S$ into one vector $\tilde{z} = (\tilde{x}^S)_{S\in\{\bar{\mathcal{O}}, \underline{\mathcal{O}}\}}$. We aim to show that all the state estimation error systems, i.e., the $\tilde{z}$-system, is input-to-state stable with respect to the sampling error $\delta(t):=\tilde{z}(t)-\tilde{z}(t_k)$, disturbances $d(t)$ and $w(t_k)$. 

\begin{thm}\label{thm:coupled-system}
    Consider system \eqref{eq:standard-system-form}, multi-observer \eqref{eq:multi-observer}, and nonlinearities $\phi_i(\cdot)$ satisfying Assumption \ref{as:nonlinearity}. Let $\mathbf{A} = \mathbb{I}_{N_{\mathcal{O}}} \otimes A$, $\mathbf{B} = \mathbb{I}_{N_{\mathcal{O}}} \otimes B$, $\mathbf{C} = \mathbb{I}_{N_{\mathcal{O}}} \otimes C$,
    \begin{equation*}
        \begin{split}
            \bar{\bfE} &= \mathbb{I}_{N_{\mathcal{O}}} \otimes \operatorname{diag}\left(\zeta_1,\zeta_2,\dots,\zeta_{N_c} \right),  \\
            \mathbf{C}^* &= \operatorname{diag}\left(C_{\supobsv_1},\dots,C_{\supobsv_{\bar{o}}},C_{\subobsv_1},\dots,C_{\subobsv_{\underline{o}}}\right).
        \end{split} 
    \end{equation*}
    Suppose there exist
    \begin{enumerate}[(i)]
        \item scalars $\bar{T} > 0$ and $(\nu,\mu_d,\mu_w) \in \mathbb{R}^3_{>0}$;
        \item symmetric and positive definite matrices $P_1$, $P_2$, $P_3 \in \mathbb{R}^{N_{\mathcal{O}}N_c}$;
        \item not necessarily symmetric matrices $N_1$, $N_2$, $N_3$, $N_4$, $N_5$, $N_6  \in \mathbb{R}^{N_{\mathcal{O}}N_c}$,
        \item a diagonal matrix $U=\operatorname{diag}(u_1,\dots,u_{N_{\mathcal{O}}N_c}) \in \mathbb{R}^{N_{\mathcal{O}}N_c \times N_{\mathcal{O}}N_c}$, where $u_i > 0$, $i\in [N_{\mathcal{O}}N_c]$;
        \item a $\mathbf{K} = \operatorname{diag}\left(\bar{K}^1,\dots,\bar{K}^{\bar{o}},\underline{K}^1,\dots,\underline{K}^{\underline{o}}\right)$;
        \item and an $\mathbf{L} = \operatorname{diag}\left(\bar{L}^1,\dots,\bar{L}^{\bar{o}},\underline{L}^1,\dots,\underline{L}^{\underline{o}}\right)$,
    \end{enumerate}
    such that the matrix inequalities \eqref{eq:simplified-LMI-cond}, \eqref{eq:LMI-cond-1}, and \eqref{eq:LMI-cond-2} are satisfied. Then for any set of sampling times $\mathcal{D}$ satisfying Assumption \ref{as:sampling} with maximum inter-sample time $\bar{T}$, the estimation errors $\tilde{z}$ satisfy
    \begin{equation} \label{eq:sample-data-conclude}
        |\tilde{z}(t)| \leq \max\{ \beta_z(|\tilde{z}(0)|,t),\ \gamma_z(||(\delta(t),d(t),w(t_k))||_{[0,t]}) \},
    \end{equation}
    where $\delta(t) = \tilde{z}(t) - \tilde{z}(t_k)$, $w := d+a+\eta$, $\beta_{z} \in \mathcal{KL}$, and $\gamma_{z} \in \mathcal{K}_\infty$.
\end{thm}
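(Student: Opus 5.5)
We will work with the stacked error $\tilde z$ and the impulsive-system framework of \cite{nagh2008exponential}, as in \cite{raff2008observer}. Stacking \eqref{eq:interim-tildex-s} over all $S\in\{\supobsv,\subobsv\}$ gives, for $t\in[t_k,t_{k+1})$,
\begin{equation*}
\dot{\tilde z} = (\bfA+\bfB\bfE(t)\bfC)\tilde z + (\bfB\bfE(t)\bfK+\bfL)\bfCs\tilde z(t_k) + (\bfB\bfE(t)\bfK+\bfL)w^*(t_k) + \bfB\bfE(t)(\textbf{1}_{N_{\mathcal{O}}}\otimes d),
\end{equation*}
where $\bfE(t)=\mathbb{I}_{N_{\mathcal{O}}}\otimes\varepsilon(t)$ satisfies $0\le \bfE(t)\le\bar\bfE$ elementwise and $w^*$ stacks the $w_S$. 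We augment the state with the timer $\tau=t-t_k\in[0,\bar T]$ and the held value $\tilde z(t_k)$. Writing $\tilde z(t_k)=\tilde z(t)-\delta(t)$ exposes the sampling error as an additive input. The flow map is $\dot\tau=1$, and at each $t_k$ the jump resets $\tau^+=0$ and the held value.

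The Lyapunov candidate follows \cite{nagh2008exponential}:
\begin{equation*}
V=\tilde z\T P_1\tilde z+(\bar T-\tau)\int_{t_k}^{t}\dot{\tilde z}(s)\T P_2\dot{\tilde z}(s)\,ds+(\bar T-\tau)(\tilde z-\tilde z(t_k))\T P_3(\tilde z-\tilde z(t_k)).
\end{equation*}
This $V$ is sandwiched by $\lambda_{\min}(P_1)|\tilde z|^2\le V$. The extra terms vanish at jumps, so $V(t_k^+)\le V(t_k^-)$ and $V$ is non-increasing across jumps.

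Along flows, we differentiate $V$. The integral-term derivative is $-\int\dot{\tilde z}\T P_2\dot{\tilde z}$, which we bound with Jensen's inequality, or equivalently with the free-weighting matrices $N_1,\dots,N_6$. These come from the zero identity
\begin{equation*}
2\xi\T N\Bigl(\tilde z-\tilde z(t_k)-\int_{t_k}^t\dot{\tilde z}\Bigr)=0,
\end{equation*}
with $\xi$ collecting $(\tilde z,\tilde z(t_k),\bfE\bfC\tilde z\text{-type nonlinear terms},d,w^*)$. The time-varying $\bfE(t)$ is handled by the incremental sector condition, Assumption~\ref{as:nonlinearity}. Set $p:=\bfE(t)(\bfC\tilde z+\bfK(\bfCs\tilde z(t_k)+w^*)+\textbf{1}\otimes d)$, the stacked increment $\phi(m)-\phi(\hat m^S)$. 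The sector bound gives componentwise $p_i(q_i-\bar\zeta_i^{-1}p_i)\ge0$, where $q$ is the argument. An S-procedure term $2p\T U(q-\bar\bfE^{-1}p)\ge0$ with the diagonal $U>0$ is then added. This produces a quadratic form in $\xi=(\tilde z,\tilde z(t_k),p,d,w^*)$. The conditions \eqref{eq:simplified-LMI-cond}, \eqref{eq:LMI-cond-1} and \eqref{eq:LMI-cond-2} should be exactly the requirement that this form is bounded by $-\nu|\tilde z|^2+\mu_d|d|^2+\mu_w|w^*|^2$ (plus a $\delta$ term). Two of these conditions come from evaluating at the vertices $\tau=0$ and $\tau=\bar T$, using convexity in $\tau$ since $(\bar T-\tau)$ enters affinely.

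The dissipation inequality then gives $\dot V\le-\nu|\tilde z|^2+\mu_d|d|^2+\mu_w|w(t_k)|^2+c|\delta|^2$ on flows, and $V^+\le V$ at jumps. We bound $V$ from above by $c'(|\tilde z|^2+\|\delta\|^2)$, using that the integral term is at most $\bar T\|\dot{\tilde z}\|^2$ and $\dot{\tilde z}$ is linearly bounded in $(\tilde z,\delta,d,w)$. This yields a decay estimate $\dot V\le-\kappa V+\ldots$, and a comparison argument gives \eqref{eq:sample-data-conclude} in max form. Converting the sum bound to a max bound only changes the class-$\mathcal{K}_\infty$ function.

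The main obstacle will be the flow-derivative step: matching the chosen free-weighting and S-procedure terms so that the resulting form is exactly \eqref{eq:LMI-cond-1}--\eqref{eq:LMI-cond-2}. The difficulty is the product $\bfB\bfE(t)\bfK$, where the unknown $\bfE(t)$ multiplies the gain. I would first try to handle it by treating $p$ as an independent variable constrained only through the $U$-weighted sector inequality, which keeps everything affine in $\tau$.
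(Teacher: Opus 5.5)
Your architecture matches the paper's: an impulsive reformulation with the held value $\tz(t_k)$ and timer $\tau$, a Naghshtabrizi-type functional in $P_1,P_2,P_3$, free-weighting matrices $N_i$, affinity in $\tau$ with vertex conditions at $\tau=0$ and $\tau=\bT$, non-increase at jumps, and a comparison argument. Two of your choices are fine or better. The weight $(\bT-\tau)$ in front of the integral only weakens the requirement at $\tau=\bT$. Bounding $V$ from above through $\sup|\dot{\tz}|$ is more careful than the paper's sandwich bound $U(\xi,\tau)\le\bar a_U|\xi|^2$, which does not hold in general for the history-dependent integral.

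The gap is the step that carries the whole content of the theorem: deriving the flow inequality from the \emph{stated} hypotheses \eqref{eq:simplified-LMI-cond}, \eqref{eq:LMI-cond-1}, \eqref{eq:LMI-cond-2}. You assert these ``should be exactly'' the condition that one combined quadratic form in $(\tz,\tz(t_k),p,d,w^*)$, augmented by $2p\T U(q-\bar{\bfE}^{-1}p)$, is dominated by $-\nu|\tz|^2+\mu_d|d|^2+\mu_w|w^*|^2$ plus a $\delta$-term. They are not:
\eqref{eq:LMI-cond-1} and \eqref{eq:LMI-cond-2} contain neither $U$ nor $\mu_d,\mu_w$. \eqref{eq:simplified-LMI-cond} contains neither $P_2,P_3$ nor any $N_i$. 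Your aggregated variable $p=\bfE(t)(\bfC\tz+\bfK\bfCs\tz(t_k)+\bfK w^*+\bd)$ creates cross terms such as $p\T U\bfK w^*$ and $p\T U\bd$ that appear nowhere in the hypotheses. Carried out, your plan proves ISS under a different, single LMI, not the stated theorem. (Minor: $\bfE(t)\neq\mathbb{I}_{N_{\mathcal{O}}}\otimes\varepsilon(t)$, because each observer has its own $\hat m^S$ and hence its own $\varepsilon^S(t)$. Only diagonality and $0\le\bfE(t)\le\bar{\bfE}$ are usable.)

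The paper reaches its hypotheses through a decomposition you do not identify. It works with the six-block vector $\mcX=(\tz,\etx,\bfE\bfC\tz,\bfE\bfK\bfCs\etx,\bfB\bfE\bd,(\bfB\bfE\bfK+\bfL)\bwp)$. This gives two separate sector channels, and $\bfE(t)$ is absorbed into the two disturbance channels. That is how the product $\bfB\bfE(t)\bfK$ is kept out of the matrix inequalities; it reappears only in the gain, through $|\bfB\bfE(t)\bfK+\bfL|$. The inequalities are then used separately:
\eqref{eq:simplified-LMI-cond} bounds $\dot V_1$ alone. Its $(4,1)$ block $U\bfK\bfCs$ applies the multiplier to the channel $\bfE\bfK\bfCs\etx$ as though its argument were $\bfK\bfCs\tz$. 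The mismatch $\tz-\etx=\delta$, handled by \eqref{eq:omega-ineq}, is exactly where the $|\delta|^2$ term originates.
\eqref{eq:LMI-cond-1} and \eqref{eq:LMI-cond-2} are, after Schur complements in $P_2$, the values at $\tau=\bT$ and $\tau=0$ of $\bT Q_3+Q_5+Q_6+\tau NP_2^{-1}N\T+(\bT-\tau)Q_7$, i.e.\ of $\dot V_2+\dot V_3$ alone.

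Be warned that this matching, taken literally, does not go through either. As printed, \eqref{eq:LMI-cond-1}--\eqref{eq:LMI-cond-2} have zero diagonal blocks in rows $3$--$6$ under a strict inequality. Equivalently, the $(3,3)$ block of the $\tau=\bT$ form is $\bT(\bfB\T P_2\bfB+N_3P_2^{-1}N_3\T)\ge0$. Likewise, \eqref{eq:simplified-LMI-cond} has the positive diagonal block $U\bar{\bfE}^{-1}$ under ``$\le 0$''. The term $\bT\dot{\tz}\T P_2\dot{\tz}$ must be absorbed by the sector multiplier and the supply rates. So those terms have to sit in the same inequality as the $P_2,P_3,N_i$ terms, which is what your combined form does. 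But then you must state that combined LMI as your hypothesis and derive the flow bound from it explicitly, rather than claim the given inequalities are it.
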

\begin{pf}
    Appendix \ref{ap:coupled-system-proof}
\end{pf}

\begin{table*}[!b] 
    \rule{\textwidth}{0.4pt}
    \centering
    \begin{equation}\label{eq:simplified-LMI-cond}
        \begin{split}
            \begin{bmatrix}
                    P_1\bfA + \bfA^TP_1 + \nu\mathbb{I}_{N_{\mathcal{O}}N_c} & \star  &  \star & \star & \star & \star & \star \\
                    {\bfC^*}^T\bfL^TP_1 & 0 & \star & \star & \star & \star & \star \\
                    \bfB^TP_1 + U\bfC & 0 & -2U\bar{\bfE}^{-1} & \star & \star & \star & \star \\
                    \bfB^TP_1 +  U\bfK\bfCs & 0 & 0 & -2U\bar{\bfE}^{-1} & \star & \star & \star \\
                    P_1 & 0 & 0 & 0 & -\mu_d\mathbb{I}_{N_{\mathcal{O}}N_c} & \star & \star \\
                    P_1 & 0 & 0 & 0 & 0 & -\mu_w\mathbb{I}_{N_{\mathcal{O}}N_c} & \star \\
                    0 & -U\bfK\bfCs & 0 & 0 & 0 & 0& U\bar{\bfE}^{-1}
                \end{bmatrix} \leq 0,
        \end{split}
    \end{equation}
    \begin{equation}\label{eq:LMI-cond-1}
        \begin{bmatrix}
            - P_3 + N_1 + N_1^T & \star & \star & \star & \star & \star & \star & \star \\
            P_3 + N_2 - N_1^T & - P_3 -N_2 - N_2^T & \star & \star & \star & \star & \star & \star \\
            N_3& -N_3 & 0 & \star & \star & \star & \star & \star \\
            N_4 &  - N_4& 0 & 0 & \star & \star & \star & \star \\
            N_5 & - N_5 & 0 & 0 & 0 & \star & \star & \star \\
            N_6 & - N_6 & 0 & 0 & 0 & 0 & \star & \star \\
            \bfA^TP_2 & {\bfC^*}^T\bfL^TP_2 & \bfB^TP_2& \bfB^TP_2 & P_2 & P_2 & -{\bar{T}}^{-1}P_2 & \star \\
            N_1^T & N_2^T & N_3^T & N_4^T & N_5^T & N_6^T & 0  & -{\bar{T}}^{-1}P_2
        \end{bmatrix} < 0,
    \end{equation}
    \begin{equation}\label{eq:LMI-cond-2}
        \begin{bmatrix}
            \alpha_{11}(P_1,P_3,N_1) & \star & \star & \star & \star & \star & \star \\ 
            \alpha_{21}(P_1,P_3,N_1,N_1) & \delta_{22}(P_3,N_2) & \star & \star & \star & \star & \star \\
            \bar{T}\bfB^TP_3 + N_3 & -\bar{T}\bfB^TP_3 -N_3 & 0 & \star & \star & \star & \star \\
            \bar{T}\bfB^TP_3 + N_4 & -\bar{T}\bfB^TP_3 -N_4 & 0 & 0 & \star & \star & \star \\
            \bar{T}P_3 + N_5 & -\bar{T}P_3 -N_5 & 0 & 0 & 0 & \star & \star \\
            \bar{T}P_3 + N_6 & -\bar{T}P_3 -N_6 & 0 & 0 & 0 & 0 & \star \\
            \bfA^TP_2 & {\bfC^*}^T\bfL^TP_2 & \bfB^TP_2 & \bfB^TP_2 & P_2 & P_2 & -{\bar{T}}^{-1}P_2  \\
        \end{bmatrix} < 0,
    \end{equation}
    \begin{equation*}
        \begin{split}
            \alpha_{11}(P_1,P_3,N_1) & =\bar{T}(P_3\bfA + \bfA^TP_3) -P_3 + N_1 + N_1^T, \\
            \alpha_{21}(P_1,P_3,N_1,N_1) & = \bar{T}{\bfC^*}^T\bfL^TP_3 - \bar{T}P\bfA + P_3 + N_2 - N_1^T, \\
            \alpha_{22}(P_3,N_2) & = -\bar{T}(P_3\bfL\bfC^* + {\bfC^*}^T\bfL^TP_3) - P_3 -N_2-N_2^T. \\
        \end{split}
    \end{equation*}
\end{table*}

All that remains is to select the final state estimate $\hat{x}$ from all state estimates $\hat{x}^S$, $S\in\{\bar{\mathcal{O}},\underline{\mathcal{O}}\}$ by applying the selection procedure $\Phi(\hat{z})$, as in \eqref{eq:consistency-measure} and \eqref{eq:selection-criterion}. We will now show that applying this selection procedure results in a bound on $|\tilde{x}(t)|$ that is independent of the attack $a(t)$.

\begin{thm}\label{prop:multi-observer}
    Consider system \eqref{eq:standard-system-form}, multi-observer \eqref{eq:multi-observer}, selection procedure \eqref{eq:consistency-measure} and \eqref{eq:selection-criterion},  nonlinearities $\phi_i(\cdot)$ satisfying Assumption \ref{as:nonlinearity}, an attack $a(t)$ satisfying Assumption \ref{as:attack}, and sensor sampling satisfying Assumption \ref{as:sampling}. Suppose that Theorem \ref{thm:coupled-system} is satisfied, then the state estimation error $\tilde{x}$ satisfies
    \begin{equation}\label{eq:prop-upper-bound}
        |\tilde{x}(t)| \leq \max\{ \beta_x( |\tilde{x}(0)|,t), \gamma_x(\|(\delta(t),d(t),\eta(t_k))\|_{[0,t]})\}
    \end{equation}
    for all $t \geq 0$ and all initial conditions $\tilde{x}(0)\in\mathbb{R}^{N_c}$, where $\beta_x \in \mathcal{KL}$ and $\gamma_x \in \mathcal{K}_\infty$. 
\end{thm}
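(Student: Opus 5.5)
The plan is to follow the majority-voting argument of \cite{Chong2020AAttacks}: turn the joint ISS bound \eqref{eq:sample-data-conclude} of Theorem~\ref{thm:coupled-system} into bounds on the individual observers that are \emph{attack-free}, and then show that the selection rule \eqref{eq:consistency-measure}--\eqref{eq:selection-criterion} can only return an estimate close to an attack-free one.

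\textbf{Step 1: decoupling.} The matrices $\bfA,\bfB,\bfC,\bfK,\bfL,\bfCs$ are block diagonal, and the $\tilde{x}^S$-subsystems \eqref{eq:interim-tildex-s} interact only through the common signals $d$, $\varepsilon(t)$ and the sampling schedule. I would therefore rerun the Lyapunov argument of Theorem~\ref{thm:coupled-system} block by block, with the diagonal blocks of $P_1,P_2,P_3,U$ and restricting the LMIs to the block of $S$. If the off-diagonal blocks of $P_i$ or $N_i$ make this awkward, I would instead require block-diagonal solutions. The outcome should be, for each $S$,
\begin{equation*}
|\tilde{x}^S(t)|\le\max\{\beta(|\tilde{x}^S(0)|,t),\gamma(\|(\delta,d,w_S(t_k))\|_{[0,t]})\}.
\end{equation*}

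\textbf{Step 2: attack-free observers.} Let $\mathcal{A}$ be the set of attacked sensors; it is constant and $|\mathcal{A}|\le N_a$ by Assumption~\ref{as:attack}. Since $|[N_c]\setminus\mathcal{A}|\ge N_c-N_a$, there is a super-observer $\supobsv_{i^*}$ with $\supobsv_{i^*}\cap\mathcal{A}=\emptyset$. Every sub-observer $\subobsv_j\subset\supobsv_{i^*}$ is then also attack-free. For all these observers $w_S=d_S+\eta_S$, so the bound of Step 1 does not depend on $a$. All observers share the initial condition $\hat{x}^S(0)=\hat{x}(0)$, hence $\tilde{x}^S(0)=\tilde{x}(0)$ and the $\beta$ term is a function of $|\tilde{x}(0)|$. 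Call the resulting attack-free bound $\rho(t)$. The triangle inequality gives $\pi^{i^*}(t)\le 2\rho(t)$, and by \eqref{eq:selection-criterion} also $\pi^{\sigma(t)}(t)\le 2\rho(t)$.

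\textbf{Step 3: the selected observer.} The super-observer $\supobsv_{\sigma(t)}$ may contain attacked sensors. It has $N_c-N_a$ elements, so $\supobsv_{\sigma(t)}\setminus\mathcal{A}$ has at least $N_c-2N_a$ elements. Hence there is a sub-observer $\subobsv_j\subseteq\supobsv_{\sigma(t)}$ with no attacked sensor, and it satisfies $|\tilde{\underline{x}}^{j}(t)|\le\rho(t)$. Then
\begin{equation*}
|\tilde{x}(t)|\le|\hat{\bar{x}}^{\sigma(t)}(t)-\hat{\underline{x}}^j(t)|+|\tilde{\underline{x}}^j(t)|\le\pi^{\sigma(t)}(t)+\rho(t)\le3\rho(t).
\end{equation*}
Writing $\beta_x=3\beta$ and $\gamma_x=3\gamma$ (with $\delta$ understood componentwise and $\eta(t_k)$ collecting the sampled noise) gives \eqref{eq:prop-upper-bound}.

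\textbf{Main obstacle.} I expect Step 1 to be the hardest part. Theorem~\ref{thm:coupled-system} as stated only bounds the stacked vector $\tilde{z}$, whose input includes $w(t_k)$ and hence the possibly unbounded attack. Extracting a per-observer bound requires the Lyapunov function and the LMIs to be compatible with the block structure. A further point to handle is that $\delta$ is defined through $\tilde{z}$; within the attack-free blocks it involves only attack-free errors, so this should be manageable.
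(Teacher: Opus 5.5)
Your proposal is correct and follows the paper's sketched proof. That proof treats $\delta$ as an extra disturbance and invokes the per-observer ISS result and the majority-voting selection argument of \cite{Chong2020AAttacks}; your Steps 2--3 are exactly that selection argument. Your Step~1 worry is legitimate, because the stacked bound \eqref{eq:sample-data-conclude} depends on $a$ through $w(t_k)$, but you do not need block-diagonal solutions. Since $\bfA,\bfB,\bfC,\bfK,\bfL,\bfCs,\bar{\bfE},U$ are all block diagonal, the $S$-principal submatrix of each of \eqref{eq:simplified-LMI-cond}, \eqref{eq:LMI-cond-1}, \eqref{eq:LMI-cond-2} is exactly the single-observer inequality in $(P_i)_{SS}$, $(N_i)_{SS}$, $U_S$, and principal submatrices inherit (semi)definiteness, so the appendix argument applies to each observer separately.
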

\begin{pf*}{Sketch of proof.}
    Consider $\delta(t)$ as an additional disturbance, then the proofs of Theorem 2 and Proposition 1 in \cite{Chong2020AAttacks} carry over directly.
\end{pf*}
We have now shown that the proposed state estimator is secure, as defined in Section \ref{sec:problem-statement}.

\section{Case study: Securely monitoring a power distribution network}\label{sec:power-distirbution-network}
We are now ready to demonstrate the capabilities of the proposed secure state estimator on our motivational use case. We first introduce the system -- a low voltage power distribution network, and conclude this section with simulations.

\subsection{Model of a low voltage power distribution network}
Consider an inverter-based, low voltage, power distribution network with $N_c$ customers in a line configuration, as shown in Figure \ref{fig:distribution-network}. Each customer $i\in [N_c]$ is equipped with an inverter denoted by $\Sigma_i$, which is capable of generating both active $\rho_{g,i}$ and reactive $q_{g,i}$ power. Customers also consume both active $\rho_{c,i}$ and reactive $q_{c,i}$ power. The voltage measurements are sampled by a centralized monitoring center periodically, in accordance with Assumption \ref{as:sampling}, making them susceptible to malicious manipulation during transmission.

\begin{figure}[ht]
    \centering
    \includegraphics[width=\linewidth]{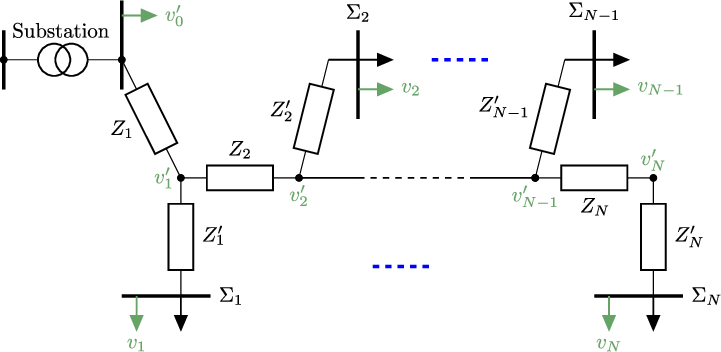}
    \caption{Radially interconnected distribution network}
    \label{fig:distribution-network}
\end{figure}

We model the radial distribution network using the linearized DistFlow model in \cite{network-reconfiguration} and assume that the power losses in the distribution line are negligible. The voltage at the substation is denoted by $v_0'$, the point-to-point voltage of customer connection points on the distribution line are denoted by $v_i'$ and the customer voltages are denoted by $v_i$. The impedances in between each connection point are denoted by $Z_i=R_i + jX_i$ and impedances from connection point to customer are denoted by $Z_i'=R_i' + jX_i'$, where $R_i,R_i'$ are resistances, $X_i,X_i'$ are reactances and $j=\sqrt{-1}$ is the imaginary unit.

Each inverter $\Sigma_i$ is equipped with a droop controller as designed in \cite{droop-controllers}, which regulates customer voltages by generating reactive power $q_{g,i}$ according to $\dot{q}_{g,i} = -a_{g,i}q_{g,i} + a_{g,i}\phi_i(\bar{v}^2-v_i^2)$,
where $a_{g,i} \in \mathbb{R}_{>0}$ is an inverter characteristic, $\bar{v} \in \mathbb{R}$ is a reference voltage level that is communicated to the customer, and $\phi_i: \mathbb{R} \to \mathbb{R}$ is a  saturated dead-zone function
\begin{equation*}
    \phi_i(w)=
    \begin{cases}
        -\bar{q}_i, &\quad w \leq w_{min}, \\
        -\left( 1 - \frac{w-w_{min}}{w_{m}-w_{min}} \right) \bar{q}_i, &\quad w_{min} < w \leq w_{m}, \\
        0, &\quad w_m < w \leq w_n, \\
        \left( \frac{w - w_n}{w_{max} - w_n} \right) \bar{q}_i, &\quad w_n < w \leq w_{max}, \\ 
        \bar{q}_i, &\quad w > w_{max}, \\
    \end{cases}
\end{equation*}
with $\bar{q}_i \in \mathbb{R}_{>0}$ representing the saturation limit of the inverter, satisfying the relation $\bar{q}_i = \sqrt{\bar{s}_i^2 - \rho_{g,i}^2}$. Here, $\bar{s}_i \in \mathbb{R}$ denotes the maximum apparent power of the inverter. Furthermore, note that $\phi_i$ satisfies the incremental sector condition \eqref{eq:incremental-sector-condition} with $\zeta_i=1,\ i \in [N_c]$ and therefore Assumption \ref{as:nonlinearity} is satisfied. 

The same change of coordinates as in \cite{droop-controllers} is applied to transform the system into the form of \eqref{eq:standard-system-form}. The state is selected as a column vector of the reactive powers generated by the customer inverters $\Sigma_i$,
\begin{equation}\label{eq:qg-state}
    x =
    \begin{pmatrix}
        q_{g,1} & q_{g,2} & \cdots & q_{g,N_c}
    \end{pmatrix}.
\end{equation}
We select the auxiliary input $u_i$ in \eqref{eq:standard-system-form} as $u_i = \bar{v}^2 - v_0'^{2} + o_i(\rho,q_c)$,
where the function $o_i:\mathbb{R}^{N_c} \times \mathbb{R}^{N_c} \to \mathbb{R}$ injects the disturbance generated by customers down the line. This choice results in $m_i$ being the squared voltage deviation for each customer $\bar{v}^2 - v_i^2$. Let us define this disturbance function as
\begin{equation*}
    \begin{split}      
        o_i(\rho,q_c) &= \sum^{i}_{j=1}{\bar{o}_j(\rho,q_c)} + \sum^{i-1}_{j=1}{\beta_{j}'(\rho_{j},q_{c,j})}, \\
    \end{split}
\end{equation*}
where $\bar{o}_j: \mathbb{R}^{N_c} \times \mathbb{R}^{N_c} \to \mathbb{R}$ encapsulates the effect of downstream customers and $\beta'_j: \mathbb{R} \times \mathbb{R} \to \mathbb{R}$ captures the voltage drop. These disturbance functions are defined as
\begin{equation*}
    \bar{o}_j(\rho,q_c) = 2 \sum^{N_c}_{k=j+1}{(X_jq_{c,k} - R_j\rho_k)} - 2\beta_{j}'(\rho_{j+1},q_{c,j+1}),
\end{equation*}
and
\begin{equation*}
    \beta_j'(\rho,q_c) = 
    \begin{cases}
        R_i'\rho + X_i'q_c, & j \geq 0, \\
        0, & j = -1.
    \end{cases}
\end{equation*}
We now construct the system matrices $A,B$ and $C$ as in \eqref{eq:standard-system-form} as follows
\begin{equation}
    A = \operatorname{diag}(-a_{g,1},-a_{g,2},\dots,-a_{g,N}), \quad B = -A
    \label{eq:A-B-matrices}
\end{equation}
and
\begin{equation}
    \begin{split}
        C = -2
        \begin{bmatrix}
            X_1 & X_1 & \cdots & X_1 \\
            X_1 & X_1 + X_2 & \cdots & X_1 + X_2 \\
            \vdots & \vdots & \ddots & \vdots \\
            X_1 & X_1 + X_2 & \dots & \sum\nolimits^{N_c}_{i=1} X_i \\
        \end{bmatrix} \\
        - 2 \operatorname{diag}(X_0',X_1',\dots,X_N').
    \end{split}
    \label{eq:C-matrix}
\end{equation}
From the state ${x}$ in in \eqref{eq:qg-state} which consists of the generated reactive power at the inverters $\Sigma_i$, we can now calculate the voltages at the customers as
\begin{equation}
    {v}_i^2(t) = C_i{x}(t) - {o}_i(\rho(t),q_c(t)) + v_0'^2(t), \quad t \geq 0.
    \label{eq:voltage-calculation}
\end{equation}

\subsection{Securely estimating the state of a power distribution network}
We aim to provide a secure state estimate of the customer voltages as in \eqref{eq:voltage-calculation}. To that end, we apply the proposed secure state estimator developed in Section \ref{sec:observer-design} to generate a state estimate $\hat{x}$ of the true state $x$. We then use this state estimate to compute a voltage estimate given by
\begin{equation}
    \hat{v}_i^2(t) = C_i\hat{x}(t) - {o}_i(\rho(t),q_c(t)) + v_0'^2(t), \quad t \geq 0.
    \label{eq:voltage-estimate}
\end{equation}

Simulation was conducted on a benchmark residential European low-voltage distribution network with $N_c=5$ customers, along with inverter power consumption and generation as provided in Table \ref{tab:simulation-parameters}. The nominal voltage at the substation is $v_0(t) = 230 + \sin(5t)\ (V)$ and the reference voltage communicated to each customer is $\bar{v}=230\ (V)$. 

\begin{table}[h]
    \centering
    \caption{Simulation parameters as in \cite{Strunz-Benchmark-Systems} Figure 7.7 and Table 7.26}
    \begin{tabular}{|c|c|c|c|c|c|}
        \hline
        $i$ & 1 & 2 & 3 & 4 & 5 \\
        \hline
        $R_{i}\ (\Omega)$ & 0.00343 & 0.00172 & 0.00343 & 0.00515 & 0.00172 \\
        \hline
        $X_{i}\ (\Omega)$ & 0.04711 & 0.02356 & 0.04711 & 0.07067 & 0.02356 \\
        \hline
        $R'_{i}\ (\Omega)$ & 0.00147 & 0.00662 & 0.00147 & 0.00147 & 0.00147 \\
        \hline
        $X'_{i}\ (\Omega)$ & 0.02157 & 0.09707 & 0.02157 & 0.02157 & 0.02157 \\
        \hline
        $\rho_{g,i}\ (W)$ & 3500 & 5500 & 4000 & 4500 & 3000 \\
        \hline
        $\rho_{c,i}\ (W)$ & 2295 & 5440 & 5440 & 2295 & 2720 \\
        \hline
        $q_{c,i}\ (VAr)$ & 300 & 960 & 480 & 600 & 400 \\
        \hline
        $\bar{s}_i\ (VA)$ & 4200 & 6500 & 4700 & 5300 & 3600 \\
        \hline
    \end{tabular}
    \label{tab:simulation-parameters}
\end{table}

Using the values for $\bar{s}_i$ and $q_{g,i}$ in Table \ref{tab:simulation-parameters}, we can recover the saturation limits $\bar{q}_i$ according to $\bar{q_i} = \sqrt{\bar{s}_i^2 - \rho_{g,i}^2}$:
\begin{equation*}
    \bar{q} = (2321.6,\  3464.1,\  2467.8,\ 2800.0,\ 1900.0).
\end{equation*}
We select $w_{n,i}=w_{m,i}=0\ (V^2)$ and $w_{max,i}=-w_{min,i}=14899.4\ (V^2)$ for all inverters $i\in[N]$. The state matrices $A,B$ and $C$ are given by \eqref{eq:A-B-matrices} and \eqref{eq:C-matrix}. We find $K^S$ and $L^S$ as follows, we first solve \eqref{eq:simplified-LMI-cond} for $\nu$, $\mu_d$, $\mu_w$, $U$, and $P_1$ and we find some $K^S$ and $L^S$ that satisfy the condition. Using these values we then solve \eqref{eq:LMI-cond-1} and \eqref{eq:LMI-cond-2} for the remaining variables, $P_2$, $P_3$, $N_i,\ i \in [6]$ for a fixed $\bar{T}=1\ (s)$. The solution to the matrix inequalities are computed using YALMIP, see \cite{Lofberg2004}, yielding $\nu=0.261$, $\mu_d=1.260$ and $\mu_w=1.260$.\footnote{The other parameters can be found at https://gitlab.tue.nl/20195429/sse-with-sampled-measurements-for-power-distribution-systems/-/tree/6498c033d621ecc58f63e934f54e455a358e904a/}

We attack the power distribution network with $a_2(t) = -5000 \operatorname{sign}(\sin(t))$, $a_5(t) = 7500\cos(5t)$, and $a_{\{2,3,5\}}(t) = 0$ for all $t \geq 0$. Note that this attack satisfies all conditions as in Assumption \ref{as:attack}. We sample the outputs as follows, $t_0=0$ and the inter-sample times are the elements of the sequence \( \mathcal{T}= \{\bT,\ 0.7\bT,\ 0.2\bT,\ 0.6\bT,\ 0.4\bT,\ \bT,\ 0.9\bT,\ 0.5\bT\}\) repeated indefinitely. Explicitly, the sample times are $t_k = \sum_{i=1}^{k} \mathcal{T}_i$, such that $\mathcal{D}= \{ \sum_{i=1}^{k} \mathcal{T}_i \ : \ k \in \mathbb{N} \}$. Note that this sequence of sampling times satisfies Assumption \ref{as:sampling}. Under this scenario, the state estimation errors $\tilde{x}$ for each customer can be found in Figure \ref{fig:state-estimation-error}. 
\begin{figure}[h]
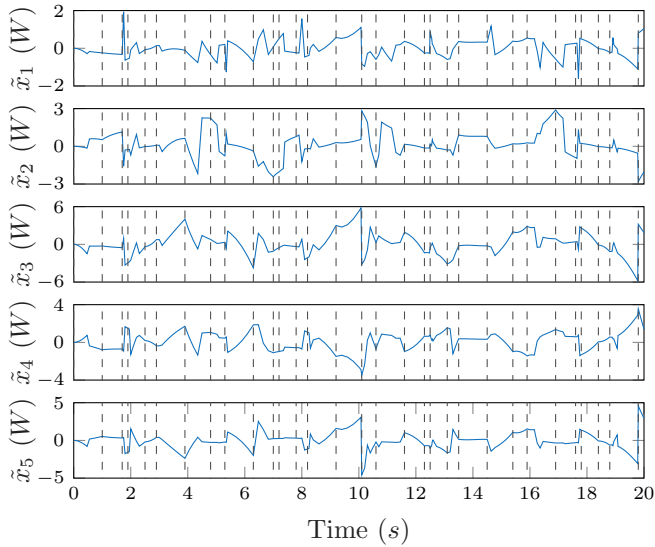

    \centering
    \include{Figures/errorplot}
    \caption{Blue lines indicate the state estimation error $\tilde{x}_i=x_i-\hat{x}_i$ at each customer $i \in [5]$ for $t \in [0,20]$. The grey, dashed lines indicate sampling times $t_k,\ k \in \mathbb{N}$.}
    \label{fig:state-estimation-error}
\end{figure}

It can be seen in Figure \ref{fig:state-estimation-error} that the state estimation error signals $\tilde{x}(t)$ are independent of the attack signal $a(t)$, consistent with Theorem \ref{prop:multi-observer}. The estimated voltage is calculated using \eqref{eq:voltage-calculation} which has an RMS error of $0.0234\ (V)$ in the aforementioned scenario. 

\section{Conclusion}
We have proposed an observer design that can provide a secure state estimate of a nonuniformly sampled, Lur'e-type plant with sector bounded nonlinearites, when less than half of the sensors are being attacked. The convergence of the state estmation error system was analyzed under an impulsive system framework and we demonstrated its efficacy on a low-voltage power distribution network in simulation. Future work will focus on extending the observer design to accomodate measurements which are also asynchronously sampled measurements.

\bibliography{ifacconf}             
                                                   







\appendix
\section{Proof of Theorem \ref{thm:coupled-system}}\label{ap:coupled-system-proof}
We will analyze the sampled-data estimation error system in \eqref{eq:interim-tildex-s} under the impulsive systems framework of \cite{nagh2008exponential}. To this end, we now define ${w}_{S}'(t) = w_{S}(t_k)$ and $e_{\tilde{x}}^S(t) = \tilde{x}^S(t_k)$, as the values of $w_S$ and $\tilde{x}^S$ at the last sampling time $t_k$. Therefore, they have derivatives $\dot{\bar{w}}_S=\dot{e}^S_{\tilde{x}}=0$ when $t\in \mathcal{C}$. Each sampled-data estimation error system can then be viewed as an impulsive system as follows for $t\in\mathcal{C}$,
\begin{equation}
    \begin{split}
        \dot{\tilde{x}}^S =  & A\tilde{x}^S + B\varepsilon(t)C\tilde{x}^S  + (B\varepsilon(t)K^S + L^S) {w}'_S \\ &
        + (B\varepsilon(t)K^S + L^S)C_Se_{\tilde{x}}^S + B\varepsilon(t)d, 
    \end{split}
\end{equation}
and for $t\in\mathcal{D}$, we have $\tilde{x}^{S}(t_k^+)=\tilde{x}^{S}(t_k)$ and $e_{\tilde{x}}^{S}(t_k^+)=e_{\tilde{x}}^{S}(t_k)$.

We now introduce the following for notational compactness: Let ${e}_{\tilde{x}}$ be the vector containing all $\etx^S$, that is $\etx := (\bar{e}_{\tilde{x}}^1,\dots,\bar{e}_{\tilde{x}}^{\bar{o}},\underline{e}_{\tilde{x}}^1,\dots,\underline{e}_{\tilde{x}}^{\underline{o}})$. We also define $\bar{d} := \mathbf{1}_{N_{\mathcal{O}}} \otimes d$, $\bar{w}' := ({w}'_{\supobsv_1},\dots,{w}'_{\supobsv_{\bar{o}}}, {w}'_{\subobsv_1},\dots,{w}'_{\subobsv_{\underline{o}}})$, $\xi := (\tilde{z}, e_{\tilde{x}})$, and let
\begin{equation*}
    \mathcal{X} := (
        \xi,\
        \mathbf{E}(t)\mathbf{C}\tilde{z},\
        \bfE(t)\bfK\bfC^*e_{\tilde{x}},\
        \bfB\bfE(t) \bar{d},\
        (\bfB\bfE(t)\bfK + \bfL)\bar{w}').
\end{equation*}
Note that $\tilde{z},\ e_{\tilde{x}},\ \bar{d},\ \bar{w}' \in \mathbb{R}^{N_{\mathcal{O}}N_C}$, $\xi \in \mathbb{R}^{2N_{\mathcal{O}}N_c}$, and that stacking these vectors results in $\mcX \in \mathbb{R}^{6N_{\mathcal{O}}N_C}$. We now write the dynamics for the ${\tilde{z}}$-system compactly below
\begin{equation}\label{eq:dot-tilde-z}
    \begin{split}
        \dot{\tilde{z}} = \mathbf{A}\tilde{z} + \mathbf{B}\mathbf{E}(t)\mathbf{C} \tilde{z} +\mathbf{BE}(t)\bar{d} + (\mathbf{BE}(t)\mathbf{K} + \mathbf{L})\bar{w}' \\ + \mathbf{BE}(t)\mathbf{K}\bfC^*e_{\tilde{x}} + \mathbf{L}\mathbf{C}^*e_{\tilde{x}}.
    \end{split}
\end{equation}
System \eqref{eq:dot-tilde-z} can also be expressed in terms of $\mcX$, given by $\dot{\tilde{z}} = M(\bfL)\mcX$, where $M(\bfL) = \begin{bmatrix}\bfA & \bfL\bfC^* & \bfB & \bfB & \mathbb{I}_{N_{\mathcal{O}}N_c} & \mathbb{I}_{N_{\mathcal{O}}N_c} \end{bmatrix}$. Therefore, the error system we will analyze can be compactly expressed as
\begin{equation}\label{eq:formal-proof-system}
    \begin{cases}
        \dot{\xi}(t) = (M(\bfL)\mcX(t),\ 0), & t \in \mathcal{C}, \\
        \xi(t_k) = (\tilde{z}(t),\ \tilde{z}(t)), & t \in \mathcal{D},
    \end{cases}
\end{equation}
which is an impulsive system as defined in \cite{nagh2008exponential}.

We now define $\tau = t-t_k$ as the time passed since the last sensor update and consider the following candidate Lyapunov function inspired by \cite{nagh2008exponential} and \cite{raff2008observer} below.
\begin{equation} \label{eq:candidate-lyap-U}
    \begin{split}
        U(\xi,\tau) = \underbrace{\tilde{z}^T P_1 \tilde{z}}_{V_1(\tilde{z})} + \underbrace{\int_{t-\tau}^t (\bar{T}- t +s) \dot{\tilde{z}}^T(s) P_2 \dot{\tilde{z}}(s)ds}_{V_2(\xi,\tau)}\\
        +\underbrace{(\bar{T} - \tau)(\tilde{z} - e_{\tilde{x}})^T P_3 (\tilde{z} - e_{\tilde{x}})}_{V_3(\xi,\tau)},
    \end{split} 
\end{equation}
where $P_1$, $P_2$, and $P_3$ are symmetric, positive definite matrices as previously defined in Theorem \ref{thm:coupled-system}. We now show that $U(\xi,\tau)$ is a valid Lyapunov function according to Theorem 1 in \cite{nagh2008exponential}. For the constants $\underline{a}_U = \lambda_{min}(P_1)$ and \(\bar{a}_U=\max\{ \lambda_{max}(P_1),\ \bar{T}^2\lambda_{max}(P_2),\\ \bar{T}\lambda_{max}(P_3)\}\) it holds that
\begin{equation}
    \underline{a}_U |\xi|^2 \leq U(\xi,\tau) \leq \bar{a}_U |\xi|^2,
\end{equation}
which shows that $U(\xi,\tau)$ is positive definite. Next, we analyze $U(\xi,\tau)$ at sampling times $t\in \mathcal{D}$: $V_1$ remains unchanged at sampling times since $\tilde{z}$ is not updated when the outputs are sampled. The term $V_2$ is zero right after sampling, since the integral has zero length. Finally, $V_3$ is also zero since $e_{\tilde{x}}(t_k) = \tilde{z}(t_k)$ as defined in \eqref{eq:formal-proof-system}. Therefore,
\begin{equation}\label{eq:jumps}
    U(\xi(t_k)),0) \leq \lim_{h \to t_k} U(\xi(h),\tau(h)).
\end{equation}
We now study the behaviour of the candidate Lyapunov function \eqref{eq:candidate-lyap-U} during flows, i.e., $t\in\mathcal{C}$. We start with $V_1$ whose time derivative along the solutions of \eqref{eq:dot-tilde-z} is
\begin{equation}\label{eq:Q1}
    \begin{split}
        \langle &\nabla{V}_1(\xi,\tau ), (\dot{\xi},1) \rangle = \dot{\tilde{z}}^TP_1\tilde{z} + \tilde{z}^TP_1\dot{\tilde{z}} \\
        &= \mcX^T
        \left(
        M^T(\bfL) P_1 
        H_{\tz} + H_{\tz}^TP_1M(\bfL)
        \right) \mcX
        \\
        &=\mcX^T
        \underbrace{
        \begin{bmatrix}
            P_1\bfA + \bfA^TP_1  & P_1\bfL\bfC^*  &  P_1\bfB & P_1\bfB & P_1 & P_1 \\
            {\bfC^*}^T\bfL^TP_1 & 0 & 0 & 0 & 0 & 0 \\
            \bfB^TP_1 & 0 & 0 & 0 & 0 & 0 \\
            \bfB^TP_1 & 0 & 0 & 0 & 0 & 0 \\
            P_1 & 0 & 0 & 0 & 0 & 0 \\
            P_1 & 0 & 0 & 0 & 0 & 0 \\
        \end{bmatrix}}_{Q_1(P_1,P_1\bfL)}\mcX,
    \end{split}
\end{equation}
where $H_{\tz}= \begin{bmatrix}\mathbb{I}_{N_{\mathcal{O}}N_c}&0&0&0&0&0\end{bmatrix}$ and thus $\tz=H_{\tz}\mcX$. Let $\mathcal{G}(U\bfK) = \bfCs^T(U\bfK)^TU^{-1}\bar{\bfE}U\bfK\bfCs$. Since the matrix inequality \eqref{eq:simplified-LMI-cond} holds, we get
\begin{equation}\label{eq:Q2}
    \begin{split}
        &Q_1(P_1,P_1\bfL) \leq \\
        &\underbrace{
        \begin{bmatrix}
            -\nu \mathbb{I}_{N_{\mathcal{O}}N_c} & 0 & -\bfC^T U & -{\bfC^*}^T(U\bfK)^T & 0 & 0 \\
            0 & -\mathcal{G}(U\bfK) & 0 & 0 & 0 & 0 \\
            -U\bfC & 0 & 2U\bar{\bfE}^{-1} & 0 & 0 & 0 \\
            -U\bfK\bfC^* & 0 & 0 & 2U\bar{\bfE}^{-1} & 0 & 0 \\
            0 & 0 & 0 & 0 & \mu_d \mathbb{I} & 0 \\
            0 & 0 & 0 & 0 & 0 & \mu_w \mathbb{I}
        \end{bmatrix}}_{Q_2(U,U\bfK,\nu, \mu_d,\mu_w)},
    \end{split}
\end{equation}
which we derive by applying the Schur complement to \eqref{eq:simplified-LMI-cond}. Therefore,
\begin{equation}\label{eq:V1-bound-w-Q2}
    \begin{split}
        \langle \nabla{V}_1(\xi,\tau ), &(\dot{\xi},1) \rangle \leq  -\nu |\tilde{z}|^2  \\
        &-2 \tilde{z}^T ({\bfC}^T U \bfE(t) \bfC  -  {\bfC}^T \bfE(t) U \bar{\bfE}^{-1} \bfE(t) \bfC) \tilde{z} \\
        &- e_{\tilde{x}}^T {\bfC^*}^T \bfK^T (U \bar{\bfE} - 2 \bfE(t) U \bar{\bfE}^{-1} \bfE(t) )\bfK \bfC^* e_{\tilde{x}} \\
        &-2\tilde{z}^T {\bfC^*}^T\bfK^T U \bfE(t) \bfK \bfC^* e_{\tilde{x}} \\ &+ \mu_d | \bfB\bfE(t)\bd|^2 + \mu_w | (\bfB\bfE(t)\bfK + \bfL)\bar{w}'|^2.
    \end{split}
\end{equation}
Recall that $\delta(t) = \tilde{z}(t) - e_{\tilde{x}}(t)$, substituting this in \eqref{eq:V1-bound-w-Q2} leads to
\begin{equation*}
    \begin{split}
        \langle \nabla{V}_1(\xi,\tau )&, (\dot{\xi},1) \rangle \leq  -\nu |\tilde{z}|^2 \\ &- e_{\tilde{x}}^T{\bfC^*}^T \bfK^T U \bar{\bfE} \bfK \bfC^* e_{\tilde{x}} \\
        &-2 \tilde{z}^T {\bfC}^T (U \bfE(t) - \bfE(t) U \bar{\bfE}^{-1} \bfE(t)) \bfC \tilde{z} \\
        &- 2e_{\tilde{x}}^T {\bfC^*}^T \bfK^T( U {\bfE}(t) -  \bfE(t) U \bar{\bfE}^{-1} \bfE(t)) \bfK \bfC^* e_{\tilde{x}} \\
        & - 2 \delta^T {\bfC^*}^T\bfK^T U \bfE(t) \bfK \bfC^* e_{\tilde{x}} \\ &+ \mu_d | \bfB\bfE(t)\bd|^2 + \mu_w | (\bfB\bfE(t)\bfK + \bfL)\bar{w}'|^2.
    \end{split}
\end{equation*}
We now analyze the fourth and fifth terms component wise, where we see that
\begin{equation*}
    u_i \varepsilon_i(t) - \frac{u_i\varepsilon_i^2(t)}{\zeta_i} = u_i\varepsilon_i(t) \left( 1 - \frac{\varepsilon_i(t)}{\zeta_i} \right) \geq 0,
\end{equation*}
since $\varepsilon_i(t) \in [0,\zeta_i]$, according to Assumption \ref{as:nonlinearity}. Hence,
\begin{equation}\label{eq:V1-ineq-w-cross-term}
    \begin{split}
        \langle \nabla&{V}_1(\xi,\tau ), (\dot{\xi},1) \rangle \leq  
        -\nu |\tilde{z}|^2 \\ &- \mu_d|\bfB\bfE(t)\bar{d}|^2  + \mu_w |(\bfB\bfE(t)\bfK + \bfL)\bar{w}'|^2\\ 
        &-2e_{\tilde{x}}^T{\bfC^*}^T \bfK^T U \bar{\bfE} \bfK \bfC^* e_{\tilde{x}} -2 \delta^T {\bfC^*}^T\bfK^T U \bfE(t) \bfK \bfC^* e_{\tilde{x}}. \\
    \end{split}
\end{equation}
For any $a,b \in \mathbb{R}^n$ and diagonal, positive definite matrix $\Omega=\Delta^T\Delta \in \mathbb{R}^{n \times n}$,
\begin{equation}\label{eq:omega-ineq}
    \begin{split}
       -2a^T\Omega b =- 2 (\Delta a)^T(\Delta b) \leq (\Delta a)^T(\Delta a) + (\Delta b)^T (\Delta b) \\= a^T \Omega a+ b^T \Omega b,
    \end{split}
\end{equation}
which can be derived from $(a+b)^T\Omega(a+b) \geq 0$. Therefore, by applying \eqref{eq:omega-ineq} on the last term in \eqref{eq:V1-ineq-w-cross-term} we can simplify the upper bound to
\begin{equation*}
    \begin{split}
        \langle \nabla{V}_1(\xi,\tau )&, (\dot{\xi},1) \rangle \leq \delta^T {\bfC^*}^T\bfK^T U \bfE(t) \bfK \bfC^* \delta  
        -\nu |\tilde{z}|^2 \\  &- \mu_d|\bfB\bfE(t)\bar{d}|^2  + \mu_w |(\bfB\bfE(t)\bfK + \bfL)\bar{w}'|^2\\ 
        &-e_{\tilde{x}}^T{\bfC^*}^T \bfK^T U( \bar{\bfE} - \bfE(t)) \bfK \bfC^* e_{\tilde{x}}, \\
    \end{split}
\end{equation*}
where, since $\zeta_i\geq\varepsilon_i(t) \implies \bar{\bfE} - \bfE(t) \geq 0$, the last term is always negative and can be omitted. Hence, we arrive at
\begin{equation} \label{eq:V1dot}
    \begin{split}
        \langle \nabla{V}_1(\xi,\tau ), &(\dot{\xi},1) \rangle \leq -\nu |\tilde{z}|^2 + |U \bfE(t)|| \bfK \bfC^*|^2 |\delta|^2  \\
        &+ \mu_d|\bfB\bfE(t)|^2|\bar{d}|^2  + \mu_w |(\bfB\bfE(t)\bfK + \bfL)|^2|\bar{w}'|^2. \\
    \end{split}
\end{equation}
We now analyze $V_2$: its time derivative along the solutions to \eqref{eq:formal-proof-system} is
\begin{equation}\label{eq:V2-derivative}
    \begin{split}
        \langle \nabla V_2(\xi,\tau), (\dot{\xi},1)\rangle = \bar{T} \dot{\tilde{z}}^T P_2 \dot{\tilde{z}} - \int_{t-\tau}^t \dot{\tilde{z}}^T(s) P_2 \dot{\tilde{z}}(s)ds, \\
    \end{split}
\end{equation}
where the first term can be written as
\begin{equation}\label{eq:Q3}
    \begin{split}
    \bar{T}\dot{\tilde{z}}^TP_2\dot{\tilde{z}} = \bar{T}\mcX^T
    \underbrace{
    M^T(\bfL)P_2P_2^{-1}P_2M(\bfL)}_{Q_3(P_2,P_2\bfL)}\mcX.
    \end{split}
\end{equation}
Now consider 
\begin{equation}\label{eq:integral-trick}
    0 \leq \int_{t-\tau}^t
    \begin{bmatrix}
        \dot{\tilde{z}}(s) \\
        \mcX(t)
    \end{bmatrix}^T
    \underbrace{
    \begin{bmatrix}
        P_2 & N^T \\
        N & NP_2^{-1}N^T
    \end{bmatrix}}_{\bar{N}}
    \begin{bmatrix}
        \dot{\tilde{z}}(s) \\
        \mcX(t)
    \end{bmatrix}
    ds,
\end{equation}
which holds for any $N = [N_1^T\ N_2^T\ N_3^T\ N_4^T\ N_5^T\ N_6^T]^T$, with $N_i \in \mathbb{R}^{N_{\mathcal{O}}N_C \times N_{\mathcal{O}}N_C},\ i \in [6]$. Since
\begin{equation*}
    \bar{N}=
    \begin{bmatrix}
        P_2 & N^T \\
        N & NP_2^{-1}N^T
    \end{bmatrix}
    =
    \begin{bmatrix}
        \mathbb{I} \\
        NP_2^{-1}
    \end{bmatrix}
    P_2
    \begin{bmatrix}
        \mathbb{I} & P_2^{-1}N^T
    \end{bmatrix},
\end{equation*}
is positive semidefinite since $P_2$ is positive definite and symmetric. 
Applying \eqref{eq:integral-trick} to  \eqref{eq:V2-derivative} gives
\begin{equation}\label{eq:V2dot}
    \begin{split}
        \langle \nabla V_2(\xi,\tau), (\dot{\xi},1)\rangle \leq \mcX^T (\bar{T}Q_3 + \tau(t) Q_4) \mcX \hspace{7em} \\
        + \mcX^T
        \underbrace{
        \begin{bmatrix}
            N_1 + N_1^T &  - N_1+N_2^T & N_3^T & N_4^T & N_5^T & N_6^T \\
            N_2 - N_1^T & -N_2-N_2^T & - N_3^T & - N_4^T & -N_5^T & -N_6^T \\
            N_3 & -N_3 & 0 & 0 & 0 & 0 \\
            N_4 & -N_4 & 0 & 0 & 0 & 0 \\
            N_5 & -N_5 & 0 & 0 & 0 & 0 \\
            N_6 & -N_6 & 0 & 0 & 0 & 0 \\
        \end{bmatrix}}_{Q_5(N)}\mcX,
    \end{split}
\end{equation}
where $Q_4(NP_2):=NP_2^{-1}N^T$ and
$Q_5(N):=N(H_{\tz} - H_{\etx}) + (H_{\tz}^T - H_{\etx}^T)N^T$ with $H_{\etx} = \begin{bmatrix}0&\mathbb{I}_{N_{\mathcal{O}}N_c}&0&0&0&0\end{bmatrix}$ and $H_{\tilde{z}} = \begin{bmatrix}\mathbb{I}_{N_{\mathcal{O}}N_c}&0&0&0&0&0\end{bmatrix}$, respectively, is derived using $\etx := H_{\etx}\mcX$ and $\tilde{z}:= H_{\tilde{z}} \mcX$, as
\begin{equation*}
    \begin{split}
        \mcX^T&N(\tilde{z}-e_{\tilde{x}}) + (\tilde{z} - e_{\tilde{x}})^TN^T\mcX = \\ &\mcX^T(N(H_{\tz} - H_{\etx}) + (H_{\tz}^T - H_{\etx}^T)N^T)\mcX = \mcX^T Q_5(N) \mcX.
    \end{split}
\end{equation*}
Finally $V_3$ is analyzed, which has the time derivative along the solutions to \eqref{eq:formal-proof-system} as follows
\begin{equation} \label{eq:V3dot}
    \begin{split}
        \langle \nabla  V_3 (&\xi,\tau), (\dot{\xi},1)\rangle \\  = & -(\tilde{z} - e_{\tilde{x}})^T P_3 (\tilde{z} - e_{\tilde{x}})\\& \quad + (\bar{T} - \tau(t))\left((\tilde{z} - e_{\tilde{x}})^T P_3 \dot{\tilde{z}} + \dot{\tilde{z}}^T P_3 (\tilde{z} - e_{\tilde{x}})  \right)\\
        = & \mcX\T(Q_6 + (\bT-\tau(t))Q_7)\mcX,
    \end{split}
\end{equation}
where
\begin{equation}\label{eq:Q6}
    Q_6(P_3) := -(H_{\tz}^T - H_{\etx}^T)P_3(H_{\tz} - H_{\etx})
\end{equation}
and
\begin{equation}\label{eq:Q7}
    \begin{split}
        &Q_7(P_3,P_3\bfL) \\
        &:= (H_{\tz}^T - H_{\etx}^T)P_3M(\bfL) + M(\bfL)^TP_3(H_{\tz} - H_{\etx}) \\
        &=
        \begin{bmatrix}
            P_3\bfA + \bfA^TP_3 & W - \bfA^TP_3 & P_3\bfB & P_3\bfB & P_3 & P_3 \\
            -P_3\bfA + W^T & -W - W^T & -P_3\bfB & -P_3\bfB & -P_3 & -P_3 \\
            \bfB^TP_3 & -\bfB^TP_3 & 0 & 0 & 0 & 0 \\ 
            \bfB^TP_3 & -\bfB^TP_3 & 0 & 0 & 0 & 0 \\ 
            P_3 & -P_3 & 0 & 0 & 0 & 0 \\ 
            P_3 & -P_3 & 0 & 0 & 0 & 0 \\
        \end{bmatrix},
    \end{split}
\end{equation}
with $W := P_3\bfL\bfC^*$. We combine the three bounds on the time derivatives \eqref{eq:V1dot}, \eqref{eq:V2dot} and \eqref{eq:V3dot} to bound the full derivative as follows
\begin{equation*}\label{eq:Vdot-bounds}
    \dot{U} \leq \mcX^T(Q_1 - Q_2)\mcX + \mcX^T(\underbrace{R_1 + \tau R_2 + (\bar{T} - \tau)R_3}_{\bar{Q}(\tau)})\mcX,
\end{equation*}
where $R_1 := \bT Q_3 + Q_5 + Q_6$, $R_2 := Q_4$, and $R_3 := Q_7$, with the $Q_i$ matrices as defined in \eqref{eq:Q1}, \eqref{eq:Q2}, \eqref{eq:Q3}, \eqref{eq:V2dot},  \eqref{eq:Q6}, and \eqref{eq:Q7}. Consider \eqref{eq:LMI-cond-1} and \eqref{eq:LMI-cond-2}, we apply the Schur complement to both inequalities to derive
\begin{equation}\label{eq:seperate-conditions}
    R_1 + \bar{T}R_2 < 0, \quad R_1 + \bar{T}R_3 < 0.
\end{equation}
We now show that these matrix inequalities imply that $\bar{Q}(\tau) < 0$. To that end, consider a $\psi \in [0,1]$ and note that \eqref{eq:seperate-conditions} implies that
\begin{equation*}
    \begin{split}
    \psi(R_1 + \bar{T}R_2)+(1-\psi)&(R_1+\bar{T}R_3) = \\ &R_1 + \psi\bT R_2 + (\bT - \psi\bT)R_3 < 0. 
    \end{split}
\end{equation*}
Hence, it should also hold for $\psi = \tau/\bT$ and therefore $\bar{Q}(\tau) < 0$ for any $\tau \in [0,\bT]$. We can now conclude that
\begin{equation*}
        \begin{split}
        \dot{U} \leq &-\nu |\tilde{z}|^2 - \kappa|\mcX|^2, + |U \bfE(t)|| \bfK \bfC^*|^2 |\delta|^2 \\ &+ \mu_d|\bfB\bfE(t)|^2|\bar{d}|^2
        + \mu_w |(\bfB\bfE(t)\bfK + \bfL)|^2|\bar{w}'|^2, \\
    \end{split}
\end{equation*}
where $\kappa := -\max_{\tau \in [0,\bT]} \lambda_{\max}\left( \bar{Q}(\tau) \right) > 0$. Therefore,
$\dot{U} \leq -\Pi|\xi|^2 + \Theta|(\delta,\bar{d},\bwp)|^2$, where
\begin{equation*}
    \begin{split}
        \Pi &:= \min\{ \nu + \kappa + \kappa|\bfEt\bfC|^2, \kappa + \kappa |\bfEt\bfK\bfCs|^2 \} \\
        \Theta &:= \max \{|U\bfEt||\bfK\bfCs|^2, \mu_d|\bfB\bfEt|^2,\\ &\mspace{192.5mu} \mu_w|\bfB\bfEt\bfK+\bfL|^2 \} \\
    \end{split}
\end{equation*}
 Note that we split up $|\mcX|^2$ component by component here and omitt all negative terms. Therefore, $\dot{U}(\xi,\tau) \leq -\frac{\Pi}{\bar{a}_U}U(\xi,\tau) + \Theta|(\delta,\bar{d},\bar{w}')|^2$. Since $|\tilde{z}|^2 \leq |\xi|^2$, $|\bd|= N_{\mathcal{O}}|d|$, $|\bwp| \leq N_{\mathcal{O}}|w'|$, and most importantly \eqref{eq:jumps}, we obtain $$|\xi(t)| \leq \max\{{\beta}_\xi(|\xi(0)|,t),\ \gamma_\xi(||(\delta,d,w')||_{[0,t]})\},$$
where
\begin{equation*}
    {\beta}_\xi(r,t) := 2r\sqrt{\frac{\bar{a}_U}{\underline{a}_U}}e^{-\frac{\Pi}{2 \bar{a}_{U}}t}, \quad \gamma_\xi(r) := 2N_{\mathcal{O}}\sqrt{\frac{\Theta} {\underline{a}_U}}r,
\end{equation*}
for $t \in \mathbb{R}_{\geq 0}$. Under Assumption \ref{as:sampling}, we have $t_0=0$ which implies $|\xi(0)|=2|\tz(0)|$ and we obtain \eqref{eq:sample-data-conclude} with $\beta_z := 2{\beta}_\xi$ and $\gamma_z := \gamma_\xi$.

\end{document}